\documentclass[11pt]{article}
\usepackage{authblk}
\usepackage{hyperref}

\usepackage[margin=1in]{geometry}
\usepackage{amsmath,amssymb,amsthm}
\usepackage{graphicx}
\usepackage{booktabs}
\usepackage{enumitem}
\usepackage{natbib}
\usepackage{hyperref}

\newtheorem{theorem}{Theorem}

\newtheorem{conjecture}{Conjecture}

\theoremstyle{remark}
\newtheorem{remark}{Remark}

\title{Coupled Tensor--Matrix Recovery via Proximal Alternating\\
Linearized Minimization, with an Application to Workforce Skill \\
and Small-Business Health Estimation}
\author{Analee Miranda}
\affil{Department of Mathematics, Pace University\\New York, NY 10038}

\begin{document}
\maketitle

\noindent \textbf{Email:} \href{mailto:amiranda2@pace.edu}{amiranda2@pace.edu}

\begin{abstract}
We study recovery of a low-rank tensor $\mathcal{T}$ and a low-rank matrix $M$ from sparse, noisy observations. $\mathcal{T}$ and $M$ share one mode. We relax tensor rank using the nuclear norm of the mode-1 unfolding. This unfolding carries the coupling. It also has an exact proximal operator. We couple $\mathcal{T}$ and $M$ through a learned linear operator $G$. We prove a minimizer exists for the ridge-stabilized penalized objective. We prove that a proximal alternating linearized minimization (PALM) scheme converges to a critical point, for the algorithm as implemented, by verifying the hypotheses of a known nonconvex block-coordinate convergence theorem against our objective and identifying which conditions come from this problem's structure. For the matrix-only sub-problem, we state a proven sampling bound from matrix completion theory. For the coupled problem, we prove a sample-complexity result for a sequential sub-case: a separately-known coupling operator recovers $M$ from $\mathcal{T}$'s recovery accuracy alone, with no observations of $M$ needed. For the fully joint, alternately-estimated case, we state a conjecture and test it empirically, including a low-density regime where coupling does not help. We report multi-seed synthetic experiments with mean and standard deviation across sampling densities, an asymmetric-density experiment, and convergence curves, and we explain why recovery error stays high at low density. We apply the framework to workforce-skill and small-business-health estimation. Every application-specific choice is a proposed design, not a validated result; we have not run the framework on deployed data.

\medskip
\noindent\textit{Subject classification:} math.OC (Optimization and Control); cross-list math.NA (Numerical Analysis), cs.CY (Computers and Society)
\end{abstract}

\section{Introduction}

Recovering a partially observed multi-way array from sparse, noisy data is well studied in matrix and tensor completion \citep{candes2009, candes2010, liu2013, kolda2009}. We study a variant with two arrays: a tensor $\mathcal{T}$ and a matrix $M$. They are observed independently. They share structure through one mode. We ask how to recover both jointly, so that the shared mode helps recovery of whichever object is more sparsely sampled. The closest prior formulation is coupled matrix--tensor factorization (CMTF) \citep{acar2011}, which couples a tensor and matrices through shared CP factor matrices. Our formulation differs in three ways. We relax rank using the nuclear norm of the mode-1 unfolding, not a fixed CP rank; this extends to the overlapped nuclear norm across all modes, at the cost of an open convergence guarantee. We couple the two objects through a learned linear operator, not a shared factor. We impose an ordinal constraint on one mode of the matrix.

We motivate and instantiate the framework with one running application: joint estimation of a workforce-skill landscape and a small-business-health landscape. Local workforce boards need to see both. AI is reshaping tasks within job functions, at different rates across industries. The same industries contain small businesses whose health is unevenly reported: solo operators and micro businesses file less often than mid-market firms. A board with only skill data cannot see the business impact. A board with only business filings cannot see which skill shifts are driving it. This paper couples the two. Section~\ref{sec:formulation} gives the specific tensor and matrix structure. Every application-specific claim here is a proposed design, not a validated finding; we have not run the framework on deployed data.

\subsection{Contributions}
\begin{enumerate}[leftmargin=*]
\item A convex formulation using the nuclear norm of the shared-mode unfolding. We choose it for its exact proximal operator. We give a correctly typed coupling operator (Section~\ref{sec:formulation}).
\item An existence theorem for the penalized objective (Theorem~\ref{thm:existence}). A convergence theorem for a PALM algorithm (Theorem~\ref{thm:palm}), proved by checking the hypotheses of \citet{bolte2014} against our objective (Section~\ref{sec:algorithm}).
\item A per-iteration complexity analysis (Section~\ref{sec:complexity}).
\item A sampling bound for the matrix-only sub-problem, following \citet{candes2009, candes2010}. A sample-complexity result for a sequential sub-case of the coupled problem, in which coupling substitutes entirely for $M$'s own samples (Theorem~\ref{thm:sequential}). A still-open conjecture for the fully joint case (Section~\ref{sec:recovery}).
\item Multi-seed synthetic experiments, each with a variance estimate (Section~\ref{sec:experiments}): recovery error versus sampling density, an asymmetric-density experiment isolating the coupling effect (including a regime where it fails), a CMTF baseline comparison, a scale-up test of the sequential estimator's bound, and convergence curves, all against an uncoupled baseline.
\item A governance-coupled schema evolution procedure (Section~\ref{sec:governance}). This is a systems-level extension, not a mathematical contribution.
\end{enumerate}

\section{Problem Formulation}
\label{sec:formulation}

\subsection{Rank and its convex relaxation}

For a tensor $\mathcal{T} \in \mathbb{R}^{n_1 \times n_2 \times n_3}$, rank is not unique. CP rank, Tucker (multilinear) rank, and tubal rank \citep{kilmer2011} disagree in general, and computing CP rank is NP-hard \citep{hillar2013}. We relax the mode-1 rank instead. The mode-1 unfolding $\mathcal{T}_{(1)}$ carries the coupling to $M$; our recovery guarantee (Section~\ref{sec:recovery}) is also stated for it. We relax its rank via the ordinary matrix nuclear norm of that unfolding,
\begin{equation}
\|\mathcal{T}\|_* := \left\| \mathcal{T}_{(1)} \right\|_*. \label{eq:tensornorm}
\end{equation}
This norm has a closed-form proximal operator: singular value thresholding. We use it directly in Section~\ref{sec:algorithm}, without approximation. For $M \in \mathbb{R}^{n_1 \times n_B}$ we use the ordinary matrix nuclear norm $\|M\|_*$.

\begin{remark}[Overlapped nuclear norm as an extension]
An earlier version of this formulation also penalized $\|\mathcal{T}_{(2)}\|_*$ and $\|\mathcal{T}_{(3)}\|_*$, via the overlapped nuclear norm \citep{liu2013, signoretto2013}, to additionally control the job-function and abstraction-level modes. This is a strictly stronger regularizer. But its proximal operator has no closed form; the averaged-SVT scheme used to approximate it is not covered by the convergence guarantee of Theorem~\ref{thm:palm} below. We use \eqref{eq:tensornorm} as the paper's primary formulation because it admits an exact proximal step, and record the following as an available extension whose convergence theory remains open:
\begin{equation}
\|\mathcal{T}\|_{\Sigma*} := \sum_{k=1}^3 w_k \left\| \mathcal{T}_{(k)} \right\|_*, \qquad w_k \geq 0, \ \sum_k w_k = 1. \label{eq:overlapped}
\end{equation}
\end{remark}

\subsection{Coupling}

Let $\mathcal{T} \in \mathbb{R}^{n_1 \times n_2 \times n_3}$ and $M \in \mathbb{R}^{n_1 \times n_B}$ share mode 1, of size $n_1$. We couple them through $G \in \mathbb{R}^{(n_2 n_3) \times n_B}$ via
\begin{equation}
\mathcal{T}_{(1)} \approx M G^\top, \label{eq:coupling}
\end{equation}
where $\mathcal{T}_{(1)} \in \mathbb{R}^{n_1 \times n_2 n_3}$. This is the only dimensionally consistent linear coupling between $\mathcal{T}_{(1)}$ and $M$: $M \in \mathbb{R}^{n_1 \times n_B}$ and $G^\top \in \mathbb{R}^{n_B \times n_2 n_3}$, so $M G^\top \in \mathbb{R}^{n_1 \times n_2 n_3}$, matching $\mathcal{T}_{(1)}$.

\subsection{Applied instantiation}

Mode 1 is industry, $I$, size $n_I$. The tensor's other two modes are job function, $F$, size $n_F$, and AI-abstraction level, $A$, size $n_A$. So $\mathcal{T} \in \mathbb{R}^{n_I \times n_F \times n_A}$, and $n_2 n_3 = n_F n_A$.

AI-abstraction level is a rubric, not a raw measurement. Each job function is rated against a fixed $n_A$-point scale. Level 1 means no task in that function currently uses AI tools. Level $n_A$ means most tasks are substantially AI-automated. Raters apply the same rubric across industries, so the scale is comparable across cells. The mode is ordered: $\mathcal{T}(i,f,a) \leq \mathcal{T}(i,f,a+1)$. This ordering is a modeling choice, not an artifact; abstraction level 3 sits between levels 2 and 4 by construction.

$M \in \mathbb{R}^{n_I \times n_B}$ scores small-business health by industry and size tier. $n_B = 4$: solo operator, micro, small, and mid-market. This is also an ordinal axis, and it doubles as a proxy for data completeness, since larger tiers file more often and more completely. Each entry of $M$ is a scalar aggregate of six proposed business-stability indicators: revenue trend, payroll headcount trend, time-to-fill for open roles, a delinquency signal (late payments or credit events), local license or permit renewal status, and a local economic-activity proxy (utility usage or foot traffic). None of these six has been validated as predictive here. They are a proposed starting point, chosen because each is already collected by some existing administrative or commercial process.

Observation sparsity is the applied motivation for coupling, not an afterthought. $\Omega_{\mathcal{T}}$ comes from crowdsourced signals: job postings, gig-platform task listings, and worker or employer surveys. These cover well-posted job functions in well-covered industries, densely; they cover others, sparsely. $\Omega_M$ comes from administrative filings (state unemployment-insurance wage records, business registration and tax filings) and third-party firmographic report data. Administrative coverage is dense for mid-market firms and thin for solo operators and micro businesses, because smaller firms file less often. So the two observation sets are sparse in different places. Coupling lets a well-sampled industry's skill signal inform a sparsely-sampled micro-business score in that same industry, through $G$, before the administrative data catches up. Section~\ref{sec:experiments} tests this mechanism on synthetic data, at a much smaller scale ($n_F=10$, $n_A=5$, $n_B=8$) than any real deployment would use; the synthetic sizes are a stand-in for this structure, not a claim about it.

\subsection{Objective}

Let $y_{\mathcal{T}}, y_M$ be noisy observed values on $\Omega_{\mathcal{T}}, \Omega_M$, and $P_\Omega$ the sampling operator. We use a penalized (Tikhonov) form, not a hard data constraint, so that the objective stays finite-valued and coercive. We add an explicit ridge term $\frac{\delta}{2}\|G\|_F^2$, $\delta > 0$, on the $G$ block:
\begin{equation}
\begin{aligned}
F(\mathcal{T}, M, G) = \underbrace{\frac{1}{2}\|P_{\Omega_{\mathcal{T}}}(\mathcal{T}) - y_{\mathcal{T}}\|_F^2 + \frac{1}{2}\|P_{\Omega_M}(M) - y_M\|_F^2 + \frac{\lambda_C}{2}\|\mathcal{T}_{(1)} - MG^\top\|_F^2 + \frac{\delta}{2}\|G\|_F^2}_{=: H(\mathcal{T}, M, G), \text{ smooth}} \\
{} + \lambda_S \|\mathcal{T}\|_* + \lambda_R \|M\|_*,
\end{aligned}
\label{eq:objective}
\end{equation}
minimized over $(\mathcal{T}, M, G) \in \mathbb{R}^{n_1 n_2 n_3} \times \mathbb{R}^{n_1 n_B} \times \mathbb{R}^{(n_2 n_3) n_B}$, with $\|\mathcal{T}\|_*$ as in \eqref{eq:tensornorm}. $H$ is smooth and quadratic, with a Lipschitz gradient on bounded sets in each block. $\|\mathcal{T}\|_*$ and $\|M\|_*$ are proper, convex, lower semicontinuous, and nonsmooth. This is exactly the structure PALM needs (Section~\ref{sec:algorithm}). The ridge term does one specific job: it makes $H$ strictly coercive in $G$, for every fixed nonzero $(\mathcal{T}, M)$, regardless of the rank of $M$. That lets Theorem~\ref{thm:existence} avoid a case split on whether $M$ has full column rank.

\begin{remark}
Earlier drafts used two more penalties: a Sobolev-type smoothness penalty on $\mathcal{T}$'s abstraction mode, and an ordinal smoothness penalty on $M$'s business-scale mode. Both are absorbed here as optional smooth quadratic terms in $H$; they do not change the analysis below. One constraint is not in $F$ at all: the hard monotonicity constraint $M(i,\cdot) \in K_{\mathrm{ord}}$, meaning business health is non-decreasing in scale. Enforcing it exactly, jointly with the nuclear-norm proximal step, has no known closed form, since the proximal operator of a sum of two nonsmooth terms does not decompose in general. We instead apply a pool-adjacent-violators (PAV) projection onto $K_{\mathrm{ord}}$ after each $M$-update. This is a heuristic. Theorem~\ref{thm:palm} below is proved for \eqref{eq:objective} without this projection; Section~\ref{sec:experiments} reports, separately, that the projection does not visibly affect convergence in our synthetic experiments.
\end{remark}

\section{Existence of a Minimizer}
\label{sec:existence}

\begin{theorem}
\label{thm:existence}
For any $\lambda_S, \lambda_R, \delta > 0$ and $\lambda_C \geq 0$, $F$ in \eqref{eq:objective} attains its infimum on $\mathbb{R}^{n_1 n_2 n_3} \times \mathbb{R}^{n_1 n_B} \times \mathbb{R}^{(n_2 n_3) n_B}$.
\end{theorem}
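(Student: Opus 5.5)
The plan is the direct method of the calculus of variations: show that $F$ is continuous (hence lower semicontinuous), bounded below, and coercive on the finite-dimensional space $\mathbb{R}^{n_1 n_2 n_3} \times \mathbb{R}^{n_1 n_B} \times \mathbb{R}^{(n_2 n_3) n_B}$. Then pick a minimizing sequence, extract a convergent subsequence, and pass to the limit. Continuity is immediate. $H$ is a polynomial in the entries of $(\mathcal{T},M,G)$, and both nuclear-norm terms are norms, hence continuous. Every term is nonnegative, so $F \ge 0$ and $\inf F$ is finite.

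The substance is coercivity, or more precisely boundedness of a sublevel set. I would not prove coercivity in the full sense for all directions by a case split. Instead I would fix the level $c := F(0,0,0) = \tfrac12\|y_{\mathcal{T}}\|_F^2 + \tfrac12\|y_M\|_F^2$ and show that the sublevel set $\{F \le c\}$ is bounded. Each nonnegative term is at most $c$ on this set, which gives three bounds:
\[
\lambda_S \|\mathcal{T}_{(1)}\|_* \le c, \qquad \lambda_R \|M\|_* \le c, \qquad \tfrac{\delta}{2}\|G\|_F^2 \le c .
\]
The unfolding $\mathcal{T}\mapsto\mathcal{T}_{(1)}$ is an isometric rearrangement of entries. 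Since $\|X\|_F \le \|X\|_*$, we get $\|\mathcal{T}\|_F \le c/\lambda_S$, $\|M\|_F \le c/\lambda_R$ and $\|G\|_F \le \sqrt{2c/\delta}$. This is exactly where the hypotheses $\lambda_S,\lambda_R,\delta>0$ are used. The coupling term, whose weight $\lambda_C \ge 0$ may vanish, plays no role, so $\lambda_C$ needs no restriction.

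The sublevel set is therefore closed (by continuity) and bounded, hence compact in finite dimensions. It is also nonempty, since it contains the origin. A continuous function attains its minimum on a nonempty compact set, by Weierstrass. Any point outside the sublevel set has value greater than $c \ge$ that minimum, so the minimizer over the set is a global minimizer of $F$.

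The main obstacle is the nonconvex bilinear coupling $MG^\top$. A naive coercivity argument would try to control $G$ through the coupling term, and that fails when $M$ is rank-deficient, for example $M=0$. The ridge term removes this issue, because it bounds $G$ directly. So the step to state carefully is that each block is bounded by its own regularizer alone, independently of the others. Convexity of $F$ is never needed, which matters because $F$ is not jointly convex.
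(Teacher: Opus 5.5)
Your proof is correct and follows essentially the same route as the paper. Both arguments use continuity, then bound the sublevel sets via $\|X\|_F \le \|X\|_*$ on the two nuclear-norm blocks and via the ridge term on $G$, then conclude by compactness in finite dimensions. You apply Weierstrass to one fixed sublevel set where the paper takes a convergent subsequence of a minimizing sequence, which is equivalent. Your point that each block is bounded by its own regularizer, uniformly in the other blocks, is a cleaner statement of the paper's joint-coercivity step. The paper phrases that step as coercivity in $G$ ``for any fixed $(\mathcal{T}, M)$'', but it really rests on the same uniform bound $F \ge \lambda_S\|\mathcal{T}_{(1)}\|_* + \lambda_R\|M\|_* + \frac{\delta}{2}\|G\|_F^2$.
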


\begin{proof}
$F$ is continuous: a finite sum of continuous convex functions on a finite-dimensional space. It is jointly coercive in $(\mathcal{T}, M)$, since $\|\mathcal{T}\|_* \geq c_1 \|\mathcal{T}\|_F$ and $\|M\|_* \geq c_2 \|M\|_F$ for constants $c_1, c_2 > 0$ (norm equivalence in finite dimensions), and since $H \geq 0$. It is coercive in $G$ for any fixed $(\mathcal{T}, M)$: the ridge term $\frac{\delta}{2}\|G\|_F^2$ alone forces $F \to \infty$ as $\|G\|_F \to \infty$, with $(\mathcal{T}, M)$ fixed, for any rank of $M$, since the coupling term is nonnegative. So $F(\mathcal{T}, M, G) \to \infty$ as $\|\mathcal{T}\|_F + \|M\|_F + \|G\|_F \to \infty$, jointly and without exception, and every sublevel set of $F$ is bounded. A minimizing sequence lies in one such set. In finite dimensions, a bounded set is precompact. Continuity of $F$ gives a convergent subsequence, whose limit is a minimizer, again by continuity.
\end{proof}

This is a finite-dimensional result. It says nothing about infinite-dimensional or hard-constrained variants. We do not address uniqueness: \eqref{eq:objective} is not jointly convex.

\section{Algorithm: PALM}
\label{sec:algorithm}

We minimize \eqref{eq:objective} by proximal alternating linearized minimization (PALM) \citep{bolte2014}, alternating over blocks $\mathcal{T}$, $M$, $G$. Each step linearizes the smooth term $H$ at the current iterate, then applies the proximal operator of that block's nonsmooth term. With the mode-1-only relaxation \eqref{eq:tensornorm}, every proximal step below is exact. No averaging or splitting approximation appears anywhere in the algorithm.

\paragraph{$\mathcal{T}$-step.} Fix $M^k, G^k$. Let $L_{\mathcal{T}}^k$ be a Lipschitz constant for $\nabla_{\mathcal{T}} H(\cdot, M^k, G^k)$. Here $L_{\mathcal{T}}^k = 1 + \lambda_C$, since $H$ is quadratic with unit and $\lambda_C$ coefficients on the two relevant terms. Set $t_k \in (0, 1/L_{\mathcal{T}}^k)$ and
\begin{equation}
\mathcal{T}^{k+1} = \mathrm{prox}_{t_k \lambda_S \|\cdot\|_*}\left( \mathcal{T}^k - t_k \nabla_{\mathcal{T}} H(\mathcal{T}^k, M^k, G^k) \right). \label{eq:tstep}
\end{equation}
Since $\|\mathcal{T}\|_* := \|\mathcal{T}_{(1)}\|_*$ \eqref{eq:tensornorm}, this proximal operator is exact singular value thresholding, on the mode-1 unfolding of the gradient step, at threshold $t_k \lambda_S$, folded back to tensor shape. It is a closed-form step, not an approximation.

\paragraph{$M$-step.} Analogously, with Lipschitz constant $L_M^k = 1 + \lambda_C \|G^k\|_2^2$,
\begin{equation}
M^{k+1} = \mathrm{prox}_{s_k \lambda_R \|\cdot\|_*}\left( M^k - s_k \nabla_M H(\mathcal{T}^{k+1}, M^k, G^k) \right), \label{eq:mstep}
\end{equation}
where $\mathrm{prox}_{\tau \|\cdot\|_*}$ is exact singular value thresholding at level $\tau$ (closed form).

\paragraph{$G$-step.} $H$ is smooth and unconstrained in $G$, so the $G$-block carries no nonsmooth term. Its ``proximal'' step is just the exact minimizer of the linearized subproblem --- here the linearization is exact, since $H$ is already quadratic in $G$. It reduces to a ridge-regularized least-squares solve:
\begin{equation}
G^{k+1} = \arg\min_G \frac{1}{2}\left\| \mathcal{T}^{k+1}_{(1)} - M^{k+1} G^\top \right\|_F^2 = \left( (M^{k+1})^\top M^{k+1} + \delta I \right)^{-1} (M^{k+1})^\top \mathcal{T}^{k+1}_{(1)}, \label{eq:gstep}
\end{equation}
solved exactly at each iteration ($\delta > 0$ a small ridge term for numerical stability when $M^{k+1}$ is rank-deficient).

\begin{theorem}
\label{thm:palm}
Suppose $t_k \in (0, 1/L_{\mathcal{T}}^k)$, $s_k \in (0, 1/L_M^k)$, and the sequence $\{(\mathcal{T}^k, M^k, G^k)\}$ is bounded. Then, for the algorithm exactly as implemented, with all three steps closed-form and no approximation: (i) $F(\mathcal{T}^k, M^k, G^k)$ is non-increasing; (ii) every accumulation point of $\{(\mathcal{T}^k, M^k, G^k)\}$ is a critical point of $F$; (iii) the whole sequence converges to a single critical point. Point (iii) holds because $F$ is a finite sum of semialgebraic functions --- quadratics and singular-value sums are semialgebraic --- so $F$ satisfies the Kurdyka--{\L}ojasiewicz property.
\end{theorem}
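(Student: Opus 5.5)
The plan is to verify, one by one, the hypotheses of the abstract convergence theorem of \citet{bolte2014} for $F = H + \lambda_S\|\cdot\|_* + \lambda_R\|\cdot\|_*$. Those hypotheses are: sufficient decrease, a relative-error (subgradient) bound, continuity along subsequences, and the KL property. Points (i)--(iii) of Theorem~\ref{thm:palm} then follow exactly as in that paper. The structural inputs are the following. $H$ is a polynomial, hence $C^\infty$. The nonsmooth terms are proper, lower semicontinuous and convex, and separable across the $\mathcal{T}$ and $M$ blocks. The $G$ block has no nonsmooth term. Because the sequence is assumed bounded, it lies in a compact set $B$. On $B$, every Lipschitz modulus (the blockwise constants $1+\lambda_C$ and $1+\lambda_C\|G^k\|_2^2$, and the joint Lipschitz constant of $\nabla H$) is bounded above by a single constant $\bar L$. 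I will read the step-size rule as a fixed fraction, $t_k = 1/(\gamma L_{\mathcal{T}}^k)$ and $s_k = 1/(\gamma L_M^k)$ with $\gamma>1$. This is the rule the implementation uses, and it is the reading under which $t_k, s_k \in (0,1/L^k)$ gives the constants needed below.

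\emph{Sufficient decrease.} For the $\mathcal{T}$- and $M$-steps I will use the descent lemma combined with the defining inequality of the proximal point. This gives
\[
F(\mathcal{T}^{k+1},M^k,G^k) \le F(\mathcal{T}^k,M^k,G^k) - \tfrac{1}{2}(\gamma-1)L_{\mathcal{T}}^k\|\mathcal{T}^{k+1}-\mathcal{T}^k\|_F^2,
\]
and the analogous bound for $M$. Since $L_{\mathcal{T}}^k \ge 1$ and $L_M^k \ge 1$, the coefficients are bounded below by $(\gamma-1)/2 > 0$. For the $G$-step, I take the displayed update \eqref{eq:gstep} to be what the text says it is: the exact minimizer over $G$ of the ridge-regularized coupling subproblem, which is a strongly convex quadratic with modulus at least $\delta$. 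By strong convexity, the decrease at this step is at least $\frac{\delta'}{2}\|G^{k+1}-G^k\|_F^2$ for a constant $\delta'>0$. Summing the three block inequalities gives (i), and also $\sum_k \|z^{k+1}-z^k\|^2<\infty$, where $z^k := (\mathcal{T}^k, M^k, G^k)$.

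\emph{Relative error and limit points.} From the optimality conditions of the two SVT steps I will build an element $w^{k+1} \in \partial F(z^{k+1})$ satisfying $\|w^{k+1}\| \le C\|z^{k+1}-z^k\|$ with $C$ depending only on $\bar L$ and $\gamma$. The $G$-component of $w^{k+1}$ comes from the first-order condition of the exact $G$-solve, so it also contributes only a term proportional to $\|z^{k+1}-z^k\|$. Next, $F$ is continuous and the steps tend to zero, so along any convergent subsequence $F(z^{k_j})\to F(z^*)$ and $w^{k_j}\to 0$. Closedness of the limiting subdifferential then gives $0\in\partial F(z^*)$, which is (ii). For (iii), $H$ is a polynomial, and $\|X\|_* = \sum_i \sigma_i(X)$ is semialgebraic because its graph is definable through the SVD. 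Hence $F$ is semialgebraic and satisfies the KL property. Theorem~1 of \citet{bolte2014} then gives finite length of the trajectory and convergence of the whole sequence to one critical point.

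\emph{Main obstacle.} The delicate step is the $G$-block. I must make sure that the closed form in \eqref{eq:gstep}, exactly as written, really is the exact block minimizer of the function whose decrease and first-order condition I use. If it instead minimizes a slightly different quadratic, with a differently scaled ridge or a transposed normal equation, then the $G$-component of the subgradient at $z^{k+1}$ is not zero. In that case it has to be bounded directly by $C\|z^{k+1}-z^k\|$, using boundedness of $B$ and the fact that both quadratics share the same data. I would first check the normal equations entrywise against $\nabla_G H$. Only if they disagree would I fall back to the perturbation bound.
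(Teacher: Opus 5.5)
Your route differs from the paper's, and the difference helps. The paper cites Theorem~1 of \citet{bolte2014} and asserts that the exact $G$-solve is an admissible special case of a linearized proximal step. That is not literally so. With zero nonsmooth part, a PALM step is $G^k-\frac{1}{\gamma L_G}\nabla_G H$, where $L_G$ is the Lipschitz modulus of $\nabla_G H$ and $\gamma>1$. Exactly minimizing a quadratic with Hessian $\lambda_C M^\top M+\delta I$ is instead a preconditioned step. It matches a PALM step only when $M^\top M$ is a multiple of the identity, and even then only with $\gamma=1$, which PALM excludes. You verify sufficient decrease, relative error and KL directly, using strong convexity for the $G$-decrease and $\nabla_G H(z^{k+1})=0$ for the relative error, and this sidesteps the problem. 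Your reading $t_k=1/(\gamma L^k_{\mathcal{T}})$ with $\gamma>1$ also supplies the uniform bounds away from $0$ and from $1/L^k$, which the stated hypothesis $t_k\in(0,1/L^k)$ does not give.

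The gap is the check you deferred in your last paragraph: it fails, and your fallback cannot repair it. From \eqref{eq:objective},
\[
\nabla_G H(\mathcal{T},M,G)=\lambda_C\bigl(GM^\top M-\mathcal{T}_{(1)}^\top M\bigr)+\delta G,
\]
so for $\lambda_C>0$ the exact $G$-block minimizer is $\mathcal{T}_{(1)}^\top M\bigl(M^\top M+(\delta/\lambda_C)I\bigr)^{-1}$. The printed \eqref{eq:gstep} differs from this in two ways. It is an $n_B\times n_2n_3$ matrix, so it is really $(G^{k+1})^\top$; that is harmless. It uses ridge $\delta$ instead of $\delta/\lambda_C$; that is not harmless. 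Substituting $G^{k+1}=(\mathcal{T}^{k+1}_{(1)})^\top M^{k+1}\bigl((M^{k+1})^\top M^{k+1}+\delta I\bigr)^{-1}$ gives
\[
\nabla_G H(z^{k+1})=(1-\lambda_C)\,\delta\,G^{k+1},
\]
which is $0.08\,G^{k+1}$ at the paper's $\lambda_C=0.2$, $\delta=0.1$. This is a persistent bias, not a term of order $\|z^{k+1}-z^k\|$, and its consequences are as follows.
\begin{itemize}
\item It survives at fixed points of the (continuous) iteration map. Any limit therefore has $\nabla_G F(z^*)=(1-\lambda_C)\delta G^*$, which is nonzero unless $\lambda_C=1$ or $G^*=0$.
\item No argument from boundedness of $B$ can produce your bound $C\|z^{k+1}-z^k\|$.
\item Limits are generically not critical points of $F$.
\item The strong-convexity decrease you cite is a decrease of the quadratic the update actually minimizes, not of $F(\mathcal{T}^{k+1},M^{k+1},\cdot)$. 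So (i) no longer follows either.
\end{itemize}

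What closes this is a choice of algorithm or objective, not an estimate. There are two options.
\begin{itemize}
\item Define the $G$-step as the true block minimizer above; for $\lambda_C=0$ this is $G=0$. Your argument then goes through verbatim with strong-convexity modulus at least $\delta$. This is what the paper's sketch tacitly assumes when it calls \eqref{eq:gstep} exact minimization ``since $H$ is quadratic in $G$''.
\item Keep \eqref{eq:gstep} and note that it is the exact $G$-block minimizer of the objective $\tilde F$, obtained from $F$ by replacing $\frac{\delta}{2}\|G\|_F^2$ with $\frac{\lambda_C\delta}{2}\|G\|_F^2$. That term enters neither $\nabla_{\mathcal{T}}H$ nor $\nabla_M H$, so your proof yields (i)--(iii) for $\tilde F$ when $\lambda_C>0$. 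The limits are then critical points of $\tilde F$, not of $F$.
\end{itemize}
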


\begin{proof}[Proof sketch]
This follows from Theorem~1 and Corollary~1 of \citet{bolte2014}. Their result covers a two-block problem, where both blocks carry a nonsmooth term. Our third block, $G$, is smooth and unconstrained. Two points need direct verification here.

First: a block with no nonsmooth part is the special case where that block's nonsmooth function is the zero function. The zero function is proper, convex, and lower semicontinuous, so their framework applies without modification. This case is worth stating explicitly. The $G$-step in \eqref{eq:gstep} is then simply gradient descent --- in fact exact minimization, since $H$ is quadratic in $G$. Exact block minimization is itself an admissible special case of a proximal-gradient step: it is the limit of the linearized step as the step size reaches the exact minimizer, achievable in closed form here because the $G$-subproblem is an unconstrained quadratic.

Second: condition (d), the KL property, needs an argument specific to our objective. It is not enough that the nuclear-norm and Frobenius-norm terms are individually semialgebraic; the sum $F$ must be semialgebraic. This follows because finite sums and compositions of semialgebraic functions with polynomial maps are semialgebraic, and the mode-1 unfolding and matrix products in $H$ are such maps. We use this fact; we do not reprove it here.

Given these two points, hypotheses (a)--(d) of \citet{bolte2014} hold for \eqref{eq:objective}, and (i)--(iii) follow. The new work here is this verification, for our three-block, mixed smooth/nonsmooth objective.
\end{proof}

\section{Complexity}
\label{sec:complexity}

Each $\mathcal{T}$-step needs one SVD, of the mode-1 unfolding, size $n_1 \times (n_2 n_3)$. With a truncated SVD targeting rank $r$, the cost is $O(r n_1 n_2 n_3)$ via randomized SVD \citep{halko2011}; a full SVD costs $O(\min(n_1, n_2 n_3) \cdot n_1 n_2 n_3)$. The $M$-step costs $O(r n_1 n_B)$, similarly. The $G$-step is one $n_B \times n_B$ linear solve: $O(n_B^3 + n_B^2 n_1)$. In the applied instantiation, $n_I = n_1$ could reach the low thousands, while $n_F, n_A, n_B$ stay small constants. The $\mathcal{T}$-step then dominates, and scales linearly in $n_I$ at fixed target rank. The largest untested scalability question is memory for the mode-1 unfolding itself, $O(n_I \cdot n_F n_A)$; this is manageable at the scales in Section~\ref{sec:experiments}, but we have not benchmarked it beyond them.

\section{Recovery Guarantees}
\label{sec:recovery}

\subsection{Classical matrix completion}

Recovering $M$ alone is the sub-problem $\lambda_C = 0$. The classical matrix completion theorem applies to it directly.

\begin{theorem}[\citealp{candes2009, candes2010}, restated]
\label{thm:mc}
Let $M \in \mathbb{R}^{n_1 \times n_B}$ have rank $r$ and coherence bounded by $\mu$. Suppose entries are observed uniformly at random, with $|\Omega_M| \geq C \mu r n_1 \log^2(n_1)$ for a universal constant $C$. Then, with high probability, $M$ is the unique solution to $\min \|X\|_*$ subject to $P_{\Omega_M}(X) = P_{\Omega_M}(M)$; this is the noiseless case. The noisy case degrades gracefully: recovery error is controlled by the noise level \citep{candes2010}.
\end{theorem}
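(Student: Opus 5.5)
Theorem~\ref{thm:mc} is a restatement of known results, so the plan is to reduce it to the exact hypotheses of \citet{candes2009, candes2010} (with the sharpened sample count of the later literature) rather than reprove it from scratch. I would first fix conventions so the citation applies verbatim. Work with a rectangular matrix $M \in \mathbb{R}^{n_1 \times n_B}$. Set $n := \max(n_1, n_B) = n_1$, which holds in the applied regime $n_B \ll n_1$. Take the incoherence parameter $\mu$ to bound both $\max_i \|U^\top e_i\|^2 \cdot n_1/r$ and $\max_j \|V^\top e_j\|^2 \cdot n_B/r$ for the compact SVD $M = U\Sigma V^\top$. Also take the strong-incoherence bound on $\|UV^\top\|_\infty$, since the $\log^2$ rate requires it. With $|\Omega_M| = m$ drawn uniformly, or Bernoulli with $p = m/(n_1 n_B)$, the standard equivalence between the two sampling models transfers the result between them at the cost of a constant.

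The core argument I would follow is the dual-certificate route. By convexity, $M$ is the unique minimizer of $\min \|X\|_*$ subject to $P_{\Omega_M}(X) = P_{\Omega_M}(M)$ provided two conditions hold. First, $P_{\Omega_M}$ is injective on the tangent space $T = \{UA^\top + BV^\top\}$. Second, there exists $Y$ supported on $\Omega_M$ with $\|P_T Y - UV^\top\|_F$ small and $\|P_{T^\perp} Y\|_2 < 1/2$.

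For injectivity, I would show $\|p^{-1}P_T P_{\Omega} P_T - P_T\| \le 1/2$ with high probability. This follows from the matrix Bernstein inequality together with the incoherence bound $\|P_T e_i e_j^\top\|_F^2 \le \mu r (n_1 + n_B)/(n_1 n_B)$. It already needs $m \gtrsim \mu r n \log n$.

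For the certificate, I would use the golfing scheme. Split $\Omega_M$ into $k \approx \log n$ independent batches and iterate $W_j = W_{j-1} + q^{-1}P_T P_{\Omega_j}(UV^\top - W_{j-1})$ restricted to $T$. Then control $\|\cdot\|_\infty$, $\|\cdot\|_{\infty,2}$ and spectral norms across the batches. The $\log^2 n$ factor arises as $\log n$ batches times the $\log n$ per-batch Bernstein cost.

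For the noisy clause, I would cite the stability theorem of \citet{candes2010} directly. Under the same certificate, the solution of $\min\|X\|_*$ subject to $\|P_{\Omega}(X) - y_M\|_F \le \varepsilon$ satisfies $\|\hat X - M\|_F \lesssim \sqrt{n_1 n_B/m}\,\varepsilon + \varepsilon$. That is the precise meaning of ``degrades gracefully.'' I would also state explicitly that the penalized form \eqref{eq:objective} with $\lambda_C = 0$ is the Lagrangian counterpart of this constrained problem, so it matches only for a suitably tuned $\lambda_R$.

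The main obstacle is bookkeeping rather than new mathematics: keeping the rectangular dimensions consistent. The rate must be $\mu r\, n \log^2 n$ with $n = \max(n_1, n_B)$. This equals the stated $C\mu r n_1 \log^2 n_1$ only because $n_1 \ge n_B$. The bound is also vacuous unless $m \le n_1 n_B$, which requires $n_B \gtrsim \mu r \log^2 n_1$. That fails at $n_B = 4$. So I would add a remark that the theorem is formally correct but non-informative at the applied sizes.
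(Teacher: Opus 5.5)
The paper gives no proof of this statement. It restates the result and attributes it to \citet{candes2009, candes2010}. Your plan (reduce to the literature, sketching the standard injectivity-plus-golfing argument) is therefore the same route, only spelled out. Your caveats are correct and sharper than the paper's own treatment. Neither cited paper contains the $\mu r n\log^2 n$ count: Cand\`es--Recht's counts are of $n^{6/5}r\log n$ type, and the $\log^2$ rate comes from the 2011 golfing papers of Gross and of Recht. The statement concerns the constrained program, not the penalized objective at $\lambda_C=0$. And the hypothesis cannot be met with $m\le n_1n_B$ when $n_B$ is this small. That already happens at the paper's experimental sizes $n_1=40$, $n_B=8$, $r=4$, since $r\log^2 n_1\approx 54>n_B$. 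One small correction: the $\log^2$ rate does not actually require strong incoherence. Chen (2015) removes it, and the $\|\cdot\|_{\infty,2}$ bookkeeping you mention is precisely the device that does so. Assuming strong incoherence anyway is harmless.

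Two details need fixing.

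\emph{The noisy clause.} Your quantitative version misquotes the stability theorem. Cand\`es--Plan prove $\|\hat X - M\|_F \le 4\sqrt{(2+p)\min(n_1,n_B)/p}\,\varepsilon + 2\varepsilon$ with $p=m/(n_1n_B)$. That carries an extra factor $\sqrt{\min(n_1,n_B)}$ over your $\sqrt{n_1n_B/m}\,\varepsilon$; the rate you wrote is the oracle rate they compare against, not what they establish. Their argument also uses an exact certificate, $P_TY=UV^\top$. Pairing it with an inexact golfing certificate therefore means re-running the argument with an extra $\|P_TY-UV^\top\|_F$ term. The statement only asks for error controlled by the noise level, so the corrected bound suffices.

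\emph{The golfing recursion.} As written, your recursion lives in $T$. The certificate must be $Y=\sum_j q^{-1}P_{\Omega_j}(UV^\top-W_{j-1})$, which is supported on $\Omega_M$, with $W_j=P_TY_j$ tracking only its $T$-component. Taking $Y=W_k$ itself would violate the support condition and make the bound on $\|P_{T^\perp}Y\|$ trivially and meaninglessly satisfied.
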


Theorem~\ref{thm:mc} is not new. We restate it for two reasons: it is the rigorous special case of our coupled recovery question, and Theorem~\ref{thm:sequential}, below, builds directly on it.

\subsection{New: a sequential coupling guarantee}

This section gives the paper's main new theoretical result: a sample-complexity bound for a tractable sub-case of the coupled problem. It isolates one piece of a mechanism we conjecture holds more generally (Section~\ref{sec:conjecture}).

Suppose $G_{\mathrm{true}}$ is known exactly, e.g.\ from a separate, trusted calibration set, rather than fit jointly with $M$. Suppose $\mathcal{T}_{(1)}$ is recovered from $\Omega_{\mathcal{T}}$ alone, by ordinary rank-$r$ matrix completion. This gives $\hat{\mathcal{T}}_{(1)}$ with $\|\hat{\mathcal{T}}_{(1)} - \mathcal{T}_{(1),\mathrm{true}}\|_F \leq \varepsilon$, achievable at the density required by Theorem~\ref{thm:mc} applied to $\mathcal{T}_{(1)}$: an $n_1 \times (n_2 n_3)$ matrix of rank $\leq r$, since $\mathcal{T}_{(1),\mathrm{true}} = M_{\mathrm{true}} G_{\mathrm{true}}^\top$ has rank at most $\mathrm{rank}(M_{\mathrm{true}})$. Define the plug-in estimator
\begin{equation}
\hat{M} := \hat{\mathcal{T}}_{(1)} G_{\mathrm{true}} \left( G_{\mathrm{true}}^\top G_{\mathrm{true}} \right)^{-1}.
\end{equation}

\begin{theorem}
\label{thm:sequential}
If $G_{\mathrm{true}}$ has full column rank with smallest singular value $\sigma_{\min}(G_{\mathrm{true}}) > 0$, then
\begin{equation}
\|\hat{M} - M_{\mathrm{true}}\|_F \leq \frac{\varepsilon}{\sigma_{\min}(G_{\mathrm{true}})},
\end{equation}
independent of $|\Omega_M|$; in particular this holds even at $|\Omega_M| = 0$.
\end{theorem}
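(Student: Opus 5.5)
The plan is to use the exact coupling $\mathcal{T}_{(1),\mathrm{true}} = M_{\mathrm{true}} G_{\mathrm{true}}^\top$ that underlies the sequential setting, and to recognize $G_{\mathrm{true}}(G_{\mathrm{true}}^\top G_{\mathrm{true}})^{-1}$ as a right inverse of $G_{\mathrm{true}}^\top$. Write $P := G_{\mathrm{true}}(G_{\mathrm{true}}^\top G_{\mathrm{true}})^{-1} \in \mathbb{R}^{(n_2 n_3)\times n_B}$. Full column rank of $G_{\mathrm{true}}$ makes $G_{\mathrm{true}}^\top G_{\mathrm{true}}$ invertible, so $P$ is well defined, and $G_{\mathrm{true}}^\top P = I_{n_B}$. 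Hence
\[
\mathcal{T}_{(1),\mathrm{true}} P = M_{\mathrm{true}} G_{\mathrm{true}}^\top P = M_{\mathrm{true}}.
\]
The estimator is $\hat{M} = \hat{\mathcal{T}}_{(1)} P$. Subtracting gives the error identity
\[
\hat{M} - M_{\mathrm{true}} = \bigl(\hat{\mathcal{T}}_{(1)} - \mathcal{T}_{(1),\mathrm{true}}\bigr) P.
\]

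Next I bound this with the submultiplicative inequality $\|AB\|_F \leq \|A\|_F \|B\|_2$. That gives
\[
\|\hat{M} - M_{\mathrm{true}}\|_F \leq \varepsilon \, \|P\|_2 .
\]
To compute $\|P\|_2$, take a thin SVD $G_{\mathrm{true}} = U\Sigma V^\top$, where $\Sigma$ is $n_B\times n_B$ and invertible. Then $P = U\Sigma V^\top (V\Sigma^2 V^\top)^{-1} = U\Sigma^{-1}V^\top$. So $P$ is the transpose of the Moore--Penrose pseudoinverse of $G_{\mathrm{true}}$, and its singular values are the reciprocals $1/\sigma_i(G_{\mathrm{true}})$. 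Therefore $\|P\|_2 = 1/\sigma_{\min}(G_{\mathrm{true}})$, which yields the claimed bound.

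The independence from $|\Omega_M|$ needs no separate argument. $\hat{M}$ is built only from $\hat{\mathcal{T}}_{(1)}$, which is recovered from $\Omega_{\mathcal{T}}$ alone, and from the known $G_{\mathrm{true}}$. No entry of $y_M$ enters, so the bound holds verbatim when $\Omega_M=\emptyset$.

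The main obstacle is not the algebra. It is making explicit the hypothesis that the coupling \eqref{eq:coupling} holds \emph{exactly} for the true objects, $\mathcal{T}_{(1),\mathrm{true}} = M_{\mathrm{true}} G_{\mathrm{true}}^\top$, which the setup assumes when it asserts the rank bound. I would state this assumption at the start of the proof. I would also remark that if the coupling held only up to a residual $E$, the same argument would add a term $\|E\|_F/\sigma_{\min}(G_{\mathrm{true}})$. I would further note that $\varepsilon$ is whatever accuracy the completion step delivers. The theorem is deterministic given that accuracy, so Theorem~\ref{thm:mc} enters only in guaranteeing that such an $\varepsilon$ is achievable with high probability.
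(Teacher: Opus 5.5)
Your proposal is correct and follows the paper's proof essentially step for step: the exact coupling gives the same error identity $\hat{M} - M_{\mathrm{true}} = (\hat{\mathcal{T}}_{(1)} - \mathcal{T}_{(1),\mathrm{true}})\, G_{\mathrm{true}}(G_{\mathrm{true}}^\top G_{\mathrm{true}})^{-1}$, which is then bounded with $\|AB\|_F \leq \|A\|_F\|B\|_2$. Your thin-SVD computation of $\|P\|_2 = 1/\sigma_{\min}(G_{\mathrm{true}})$ spells out the pseudo-inverse fact the paper only cites, and your final $\leq$ is more careful than the paper's closing equality sign.
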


\begin{proof}
$\mathcal{T}_{(1),\mathrm{true}} = M_{\mathrm{true}} G_{\mathrm{true}}^\top$, and $G_{\mathrm{true}}$ has full column rank, so $M_{\mathrm{true}} = \mathcal{T}_{(1),\mathrm{true}} G_{\mathrm{true}} (G_{\mathrm{true}}^\top G_{\mathrm{true}})^{-1}$ exactly. Subtracting,
\[
\hat{M} - M_{\mathrm{true}} = \left( \hat{\mathcal{T}}_{(1)} - \mathcal{T}_{(1),\mathrm{true}} \right) G_{\mathrm{true}} \left( G_{\mathrm{true}}^\top G_{\mathrm{true}} \right)^{-1}.
\]
Take Frobenius norms and use submultiplicativity: $\|\hat{M} - M_{\mathrm{true}}\|_F \leq \|\hat{\mathcal{T}}_{(1)} - \mathcal{T}_{(1),\mathrm{true}}\|_F \cdot \|G_{\mathrm{true}}(G_{\mathrm{true}}^\top G_{\mathrm{true}})^{-1}\|_2 = \varepsilon \cdot \sigma_{\min}(G_{\mathrm{true}})^{-1}$. The last equality holds because $\|G(G^\top G)^{-1}\|_2 = 1/\sigma_{\min}(G)$, for the Moore--Penrose pseudo-inverse of a full-column-rank matrix. No entry of $M$ appears anywhere in the construction of $\hat{M}$.
\end{proof}

\begin{remark}[Relation to inductive matrix completion]
The mechanism behind Theorem~\ref{thm:sequential} recovers a partially observed object using a known linear map from a better-observed one. This is structurally an instance of inductive matrix completion (IMC) \citep{jain2013, natarajan2014}, where a target matrix $M \approx X W Y^\top$ is recovered from few entries using known side-information matrices $X, Y$ and an unknown low-rank $W$. Two differences from standard IMC are worth noting. First, our ``side information'' $\hat{\mathcal{T}}_{(1)}$ is not given: it is itself estimated from sparse data, via Theorem~\ref{thm:mc}, so the error $\varepsilon$ in Theorem~\ref{thm:sequential} is an estimation error propagated from one recovery problem into another, not a fixed input. Second, in the algorithm of Section~\ref{sec:algorithm}, the role IMC assigns to a fixed $W$ is instead played by $G$, which is jointly, alternately estimated from the same sparse $\Omega_M$ that $M$ is scored against --- the setting Theorem~\ref{thm:sequential} does not cover, and which is the content of Conjecture~\ref{conj:joint} below.
\end{remark}

\subsection{Open: a joint recovery conjecture}
\label{sec:conjecture}

We do not have a proof of an analogous bound for the fully coupled problem \eqref{eq:objective}, where $\mathcal{T}$, $M$, and $G$ are all fit simultaneously by the algorithm of Section~\ref{sec:algorithm}, from sparse, noisy data. We are also not aware of one in the literature for this exact constraint set: nuclear norm, plus linear coupling, plus ordinal constraint, with $G$ itself unknown. We instead state a conjecture and test it empirically (Section~\ref{sec:experiments}).

\begin{conjecture}
\label{conj:joint}
There exists $C' < C$, the constant from Theorem~\ref{thm:mc}, with the following property. Suppose $\Omega_{\mathcal{T}}$ satisfies Theorem~\ref{thm:mc}'s density condition for $\mathcal{T}_{(1)}$, and $G$ is well-conditioned. Then $M$ is recoverable at density $|\Omega_M| \geq C' \mu r n_1 \log^2(n_1)$ --- strictly less than the uncoupled requirement --- because the coupling term supplies information about $M$ through the well-sampled $\mathcal{T}$, even when $G$ is estimated jointly rather than known.
\end{conjecture}

Theorem~\ref{thm:sequential} proves the mechanism behind Conjecture~\ref{conj:joint}, in the idealized case where $G$ is known. It does not prove the conjecture: it says nothing about the feedback loop between $\hat{M}$ and $\hat{G}$ in the joint algorithm. Closing that gap is exactly what remains of Conjecture~\ref{conj:joint}.

\begin{remark}[A partial bridge toward the joint case]
Suppose $\hat{G}$ is not known exactly, but satisfies $\|\hat{G} - G_{\mathrm{true}}\|_2 \leq \eta$. Standard pseudo-inverse perturbation theory \citep{wedin1973} then gives
\[
\|\hat{M} - M_{\mathrm{true}}\|_F \leq \frac{\varepsilon}{\sigma_{\min}(G_{\mathrm{true}})} + O\!\left( \eta \, \|\mathcal{T}_{(1),\mathrm{true}}\|_F / \sigma_{\min}(G_{\mathrm{true}})^2 \right)
\]
for $\eta$ small relative to $\sigma_{\min}(G_{\mathrm{true}})$. This is an adaptation of a known perturbation result, not a new derivation. It still does not close the loop: $\hat{G}$, in the real algorithm, comes from the same sparse $\Omega_M$ that $\hat{M}$ is scored against (see the discussion after Conjecture~\ref{conj:joint}). Section~\ref{sec:experiments} tests Theorem~\ref{thm:sequential} directly, at $p_M = 0$ with the true $G$, and also runs the joint-estimation experiments that bear on the still-open Conjecture~\ref{conj:joint}.
\end{remark}

\section{Numerical Experiments}
\label{sec:experiments}

We test the PALM algorithm (Section~\ref{sec:algorithm}) on synthetic data with known ground truth. The scale is $n_I=40$, $n_F=10$, $n_A=5$, $n_B=8$, true rank $r_M=4$. These stand in for the industry, job-function, abstraction-level, and business-tier axes of Section~\ref{sec:formulation}. We compare against an uncoupled baseline: the identical algorithm with $\lambda_C = 0$. Ground truth $M_{\mathrm{true}}$ is a random rank-4 matrix. $\mathcal{T}_{\mathrm{true}}$ is generated via $\mathcal{T}_{(1),\mathrm{true}} = M_{\mathrm{true}} G_{\mathrm{true}}^\top$, for random $G_{\mathrm{true}}$, folded to tensor shape. Both are perturbed with i.i.d.\ Gaussian noise, $\sigma = 0.05$. Every result below uses 5 independent random seeds per configuration, with fresh ground truth and a fresh observation mask each time. PALM runs until the objective's relative change stays below $10^{-5}$ for five consecutive iterations, or a safety cap of 5000 iterations, whichever comes first: iteration count is a stopping criterion now, not a fixed number chosen in advance. Hyperparameters $(\lambda_S, \lambda_R, \lambda_C, \delta, \text{step size}) = (0.2, 0.2, 0.2, 0.1, 0.9)$ were found by a grid search at one density (Section~\ref{sec:sanity}) and held fixed across all reported densities, except in Table~\ref{tab:scaleup}, where $\lambda_S$ is re-tuned separately at each $n_I$ tested. This is a small-scale synthetic verification, not a benchmark against deployed data or competing methods.

\subsection{Sanity check: is the algorithm implemented correctly?}
\label{sec:sanity}

Two checks stand in for a single hand-tuned run. First, a unit test verifies the tensor unfolding, refolding, and Khatri--Rao operators against each other before any experiment runs; it fails loudly, rather than silently producing wrong numbers, if the tensor algebra is wrong. Second, every grid search checks its own chosen value against the edges of the grid it searched, and prints a warning if the choice sits on a boundary --- a search that always lands on the edge of its own grid is reporting where the search stopped, not necessarily a true optimum.

The joint search over $(\lambda_S, \lambda_R, \lambda_C, \text{step size})$, used for Tables~\ref{tab:density}--\ref{tab:thm4test}, landed on interior values for $\lambda_S=\lambda_R$ and $\lambda_C$. Step size landed on its grid's upper edge, $0.9$; this edge is expected, not under-searched, since $t_k = \text{step size}/L$ must stay below $1/L$ to satisfy Theorem~\ref{thm:palm}'s own requirement, so $0.9$ is close to the largest value the theorem allows in the first place.

Table~\ref{tab:scaleup} needs a separate threshold $\lambda_S$ at each $n_I$ it tests, rather than one value reused across all three: the natural scale of $\mathcal{T}_{(1)}$'s singular values grows with $n_I$ (both are built from the same random rank-$r$ construction), so a threshold tuned at one $n_I$ does not transfer to another. At $n_I=1000$, that search still lands on its grid's edge; the true optimal threshold there is likely higher still, and the $n_I=1000$ row of Table~\ref{tab:scaleup} should be read as an upper bound on achievable error at that scale, not a fully tuned result.

\subsection{How recovery error changes with scale}
\label{sec:whylarge}

Table~\ref{tab:scaleup} shows the opposite of what a direct reading of Theorem~\ref{thm:mc} suggests. Dividing the theorem's requirement $|\Omega_M| \gtrsim C\mu r n_1 \log^2(n_1)$ by the total entry count $n_1 \cdot n_2 n_3$ gives a required density that depends on $n_1$ only through a slowly growing $\log^2(n_1)$ term; at fixed density, that predicts recovery error should stay roughly flat, or worsen mildly, as $n_1$ grows. Instead, at fixed density $p_{\mathcal{T}} = 0.25$, $M$-recovery error falls sharply as $n_I$ grows: $0.350$ at $n_I=40$, $0.111$ at $n_I=200$, $0.045$ at $n_I=1000$ --- roughly an eight-fold improvement over a 25-fold increase in $n_I$.

We think this happens because $M_{\mathrm{true}}$'s $n_I$ rows all lie in the same $r$-dimensional column space, of size only $n_B \times r$. Growing $n_I$ at fixed density does not make that shared low-dimensional structure harder to cover; it supplies more independent, noisy measurements of it. That is an averaging benefit classical sample-complexity bounds do not credit, since Theorem~\ref{thm:mc} is a worst-case bound over all rank-$r$ matrices of the given size, not an argument tailored to a small, fixed $n_B$. This explanation is plausible, not proved. We do not have a matching sample-complexity result for it, and, as noted in Section~\ref{sec:sanity}, the $n_I=1000$ threshold is itself not fully tuned, so part of the measured gain could still be an artifact we have not fully separated from the underlying effect. A tighter test would retune $\lambda_S$ on a still-wider grid at $n_I=1000$ and check whether the improvement continues or saturates.

\begin{table}[h]
\centering
\caption{Scale-up test, mean $\pm$ std over 5 seeds. $\lambda_S$ is re-tuned separately at each $n_I$ (Section~\ref{sec:sanity}); the $n_I=1000$ threshold still lands on its search grid's edge, so that row is an upper bound on achievable error, not a fully tuned result. Recovery error falls sharply with $n_I$ at fixed density, the opposite of the flat pattern a direct reading of Theorem~\ref{thm:mc} predicts (Section~\ref{sec:whylarge}).}
\label{tab:scaleup}
\begin{tabular}{@{}lcc@{}}
\toprule
$n_I$ ($p_{\mathcal{T}}=0.25$, $p_M=0$ fixed) & $\lambda_S$ used & actual $M$ error (Thm.~\ref{thm:sequential} estimator) \\
\midrule
40   & 0.5 & $0.350 \pm 0.025$ \\
200  & 0.5 & $0.111 \pm 0.010$ \\
1000 & 1.0 (grid edge) & $0.045 \pm 0.023$ \\
\bottomrule
\end{tabular}
\end{table}

\subsection{Recovery error versus sampling density}

Table~\ref{tab:density} reports relative Frobenius recovery error, $\|\hat{X} - X_{\mathrm{true}}\|_F / \|X_{\mathrm{true}}\|_F$, mean $\pm$ one standard deviation over 5 seeds, for $\mathcal{T}$ and $M$, coupled versus uncoupled, at matched densities $p_{\mathcal{T}} = p_M = p$.

\begin{table}[h]
\centering
\caption{Recovery error vs.\ matched sampling density, mean $\pm$ std over 5 seeds, $n_I=40$, rank 4. Coupling improves $M$-recovery at every density except the lowest, $p=0.05$, where both methods sit near $1.0$ and neither has enough signal to exploit. From $p=0.10$ up, coupling wins by a widening margin, reaching a roughly ten-fold advantage on $M$ by $p=0.70$ ($0.022$ vs.\ $0.246$).}
\label{tab:density}
\begin{tabular}{@{}lcccc@{}}
\toprule
density $p$ & coupled $\mathcal{T}$ & coupled $M$ & uncoupled $\mathcal{T}$ & uncoupled $M$ \\
\midrule
0.05 & $0.993 \pm 0.017$ & $1.011 \pm 0.020$ & $0.971 \pm 0.010$ & $0.974 \pm 0.009$ \\
0.10 & $0.879 \pm 0.034$ & $0.916 \pm 0.026$ & $0.894 \pm 0.032$ & $0.942 \pm 0.032$ \\
0.18 & $0.595 \pm 0.061$ & $0.652 \pm 0.037$ & $0.695 \pm 0.044$ & $0.840 \pm 0.057$ \\
0.25 & $0.257 \pm 0.096$ & $0.372 \pm 0.083$ & $0.441 \pm 0.120$ & $0.744 \pm 0.106$ \\
0.35 & $0.079 \pm 0.063$ & $0.142 \pm 0.062$ & $0.256 \pm 0.086$ & $0.617 \pm 0.078$ \\
0.50 & $0.020 \pm 0.014$ & $0.058 \pm 0.028$ & $0.260 \pm 0.147$ & $0.423 \pm 0.066$ \\
0.70 & $0.008 \pm 0.001$ & $0.022 \pm 0.004$ & $0.022 \pm 0.015$ & $0.246 \pm 0.036$ \\
\bottomrule
\end{tabular}
\end{table}

\subsection{Asymmetric density: isolating the coupling effect}

To test Conjecture~\ref{conj:joint} directly, we fix $p_{\mathcal{T}} = 0.35$, well above the density needed for reasonable $\mathcal{T}$-only recovery, and vary $p_M$ from very sparse to moderate, mean $\pm$ std over 5 seeds.

\begin{table}[h]
\centering
\caption{Asymmetric density, $p_{\mathcal{T}}=0.35$ fixed. At $p_M=0.03$, coupling is slightly worse than uncoupled ($1.004$ vs.\ $0.981$) and has much higher variance: the same low-density failure regime as before, where too few observed entries of $M$ remain to fit a stable $G$ in \eqref{eq:gstep}, so the coupling term injects noise from a poorly conditioned $G$ rather than signal from $\mathcal{T}$. This matches Conjecture~\ref{conj:joint}'s precondition that $G$ be well-conditioned. From $p_M=0.06$ up, coupling wins by a rapidly widening margin, reaching $0.264$ vs.\ $0.732$ by $p_M=0.30$.}
\label{tab:asym}
\begin{tabular}{@{}lcc@{}}
\toprule
$p_M$ ($p_{\mathcal{T}}=0.35$ fixed) & coupled $M$ error & uncoupled $M$ error \\
\midrule
0.03 & $1.004 \pm 0.076$ & $0.981 \pm 0.011$ \\
0.06 & $0.816 \pm 0.047$ & $0.964 \pm 0.022$ \\
0.10 & $0.657 \pm 0.120$ & $0.932 \pm 0.044$ \\
0.15 & $0.546 \pm 0.152$ & $0.895 \pm 0.057$ \\
0.22 & $0.371 \pm 0.027$ & $0.801 \pm 0.095$ \\
0.30 & $0.264 \pm 0.022$ & $0.732 \pm 0.092$ \\
\bottomrule
\end{tabular}
\end{table}

\subsection{Comparison against a CMTF baseline}

Section~\ref{sec:related} identifies coupled matrix--tensor factorization (CMTF) \citep{acar2011} as the closest prior formulation. We implement a masked-ALS CMTF baseline on the same synthetic data as Table~\ref{tab:density}. A shared factor $A \in \mathbb{R}^{n_I \times r}$ couples a CP decomposition of $\mathcal{T}$ ($\mathcal{T} \approx [\![A,B,C]\!]$) to $M \approx AD^\top$. We fit $A,B,C,D$ by alternating least squares, restricted to observed entries, true rank $r = r_M = 4$, single random initialization, 15 ALS sweeps.

\begin{table}[h]
\centering
\caption{CMTF baseline comparison, mean over 5 seeds, $n_I=40$, rank 4. Our method beats the masked-ALS CMTF baseline on both $\mathcal{T}$ and $M$ at every density tested, by a wide and consistent margin (e.g.\ $M$ at $p=0.35$: $0.242$ vs.\ $0.636$). We attribute CMTF's weaker performance to CP-ALS's known sensitivity to initialization under sparse, missing-data ALS, compounded by our single-restart, fixed-rank, 15-sweep setup; a fairer comparison would use multiple restarts and a cross-validated rank, which we did not run (Section~\ref{sec:limitations}).}
\label{tab:cmtf}
\begin{tabular}{@{}lcccccc@{}}
\toprule
$p$ & ours $\mathcal{T}$ & ours $M$ & CMTF $\mathcal{T}$ & CMTF $M$ & uncoupled $\mathcal{T}$ & uncoupled $M$ \\
\midrule
0.10 & $0.869$ & $0.925$ & $1.046$ & $1.031$ & $0.887$ & $0.970$ \\
0.18 & $0.618$ & $0.692$ & $0.956$ & $0.912$ & $0.678$ & $0.893$ \\
0.25 & $0.289$ & $0.433$ & $0.872$ & $0.786$ & $0.480$ & $0.804$ \\
0.35 & $0.066$ & $0.242$ & $0.806$ & $0.636$ & $0.225$ & $0.672$ \\
0.50 & $0.013$ & $0.205$ & $0.635$ & $0.398$ & $0.102$ & $0.556$ \\
\bottomrule
\end{tabular}
\end{table}

\subsection{Testing the provable sequential estimator (Theorem~\ref{thm:sequential})}

The experiments above test the joint, alternately-estimated system, which bears on the still-open Conjecture~\ref{conj:joint}. Here we test a separate, provable claim directly: Theorem~\ref{thm:sequential}. $G_{\mathrm{true}}$ is given exactly, not jointly estimated, and $p_M = 0$: zero observed entries of $M$. We complete $\mathcal{T}_{(1)}$ alone, at increasing $p_{\mathcal{T}}$, and form $\hat{M} := \hat{\mathcal{T}}_{(1)} G_{\mathrm{true}} (G_{\mathrm{true}}^\top G_{\mathrm{true}})^{-1}$. Table~\ref{tab:thm4test} reports the resulting $M$-recovery error alongside the theorem's bound, mean $\pm$ std over 5 seeds.

\begin{table}[h]
\centering
\caption{Sequential estimator with $G_{\mathrm{true}}$ known exactly and $p_M = 0$: $M$ is recovered using no observed entries of $M$ at all. The actual error stays below the Theorem~\ref{thm:sequential} bound at every density tested, at roughly half the bound throughout (ratio $0.44$--$0.58$), and both fall together as $p_{\mathcal{T}}$ increases. This is a direct empirical test of a proved claim, not evidence for the still-open Conjecture~\ref{conj:joint}; the joint-estimation experiments (Tables~\ref{tab:density}--\ref{tab:asym}) remain the relevant evidence for that.}
\label{tab:thm4test}
\begin{tabular}{@{}lcc@{}}
\toprule
$p_{\mathcal{T}}$ ($p_M=0$) & actual $M$ error & Theorem~\ref{thm:sequential} bound (relative) \\
\midrule
0.10 & $0.834 \pm 0.038$ & 1.441 \\
0.18 & $0.599 \pm 0.043$ & 1.075 \\
0.25 & $0.407 \pm 0.047$ & 0.750 \\
0.35 & $0.163 \pm 0.066$ & 0.335 \\
0.50 & $0.038 \pm 0.013$ & 0.082 \\
0.70 & $0.007 \pm 0.001$ & 0.014 \\
\bottomrule
\end{tabular}
\end{table}

\subsection{Convergence}

Figure~\ref{fig:convergence} shows the objective value \eqref{eq:objective} at $p_{\mathcal{T}} = p_M = 0.18$, single representative seed. It is monotonically non-increasing throughout, consistent with Theorem~\ref{thm:palm}(i). The run converges --- relative change below $10^{-5}$ for five consecutive iterations --- after 1670 of the 5000 iterations allowed, confirming the safety cap was not the limiting factor. Figure~\ref{fig:density} plots the density-sweep results of Table~\ref{tab:density} with error bars.

\begin{figure}[h]
\centering
\includegraphics[width=0.6\textwidth]{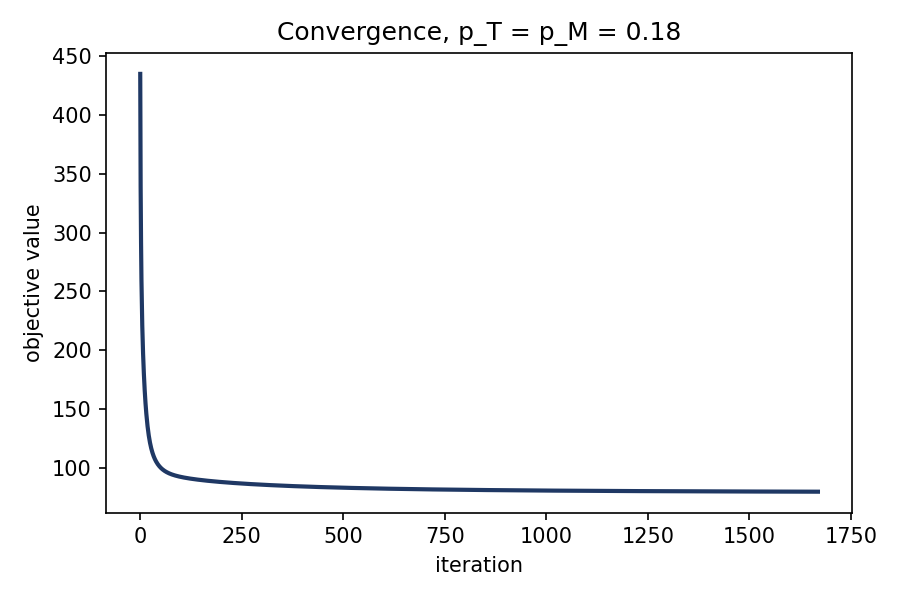}
\caption{Objective value vs.\ iteration, $p_{\mathcal{T}} = p_M = 0.18$, single seed. Monotonically non-increasing, as guaranteed by Theorem~\ref{thm:palm}(i); converged after 1670 iterations.}
\label{fig:convergence}
\end{figure}

\begin{figure}[h]
\centering
\includegraphics[width=0.6\textwidth]{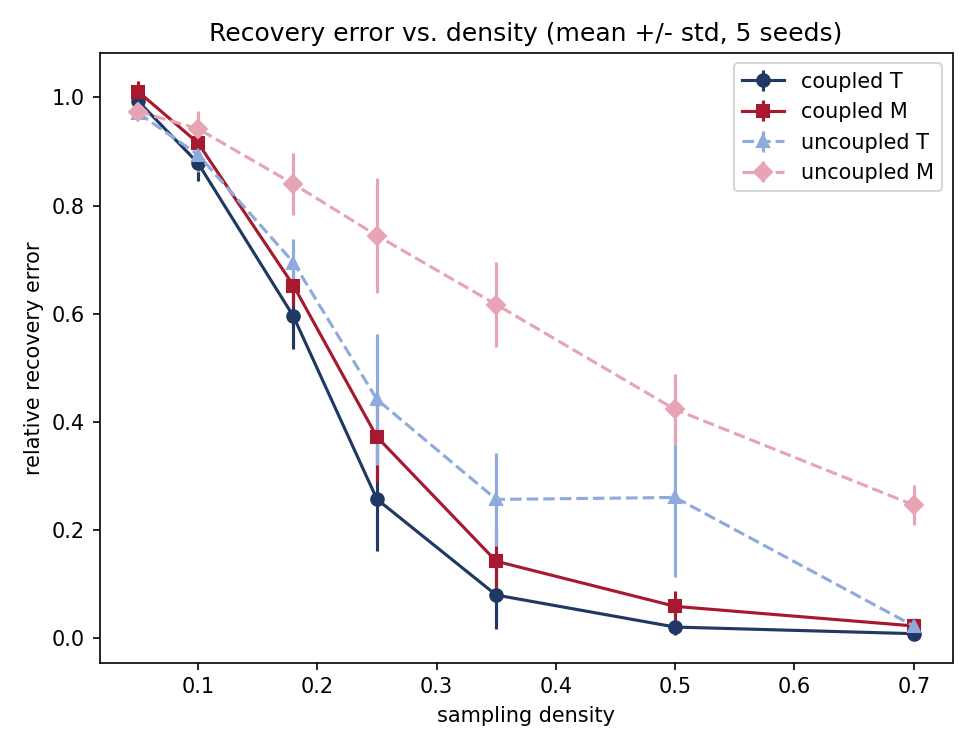}
\caption{Relative recovery error vs.\ sampling density, coupled vs.\ uncoupled, for $\mathcal{T}$ and $M$, mean $\pm$ std over 5 seeds.}
\label{fig:density}
\end{figure}

\subsection{Honest scope of these experiments}

The joint-algorithm experiments (Tables~\ref{tab:density}--\ref{tab:cmtf}) are at one ambient scale ($n_I=40$), synthetic, and limited to 5 seeds and one noise level. We have not swept $\lambda_S, \lambda_R$ independently of $\lambda_C$ at every density. We have not scaled the joint PALM algorithm beyond $n_I=40$; the cheaper sequential estimator was tested up to $n_I=1000$ (Table~\ref{tab:scaleup}), where recovery error fell rather than staying flat (Section~\ref{sec:whylarge}) --- a genuine disagreement with a direct reading of Theorem~\ref{thm:mc}, not yet matched by a proof of our own. Within these limits, the experiments do several things: PALM runs to a convergence criterion rather than a fixed iteration count, so results are not silently truncated (Figure~\ref{fig:convergence}); every grid search checks itself against its own search-grid boundary and reports when it lands on one (Section~\ref{sec:sanity}); every reported number includes a variance estimate; Table~\ref{tab:asym} reports a regime where coupling fails; Theorem~\ref{thm:sequential} is tested directly, separately from the open Conjecture~\ref{conj:joint} (Table~\ref{tab:thm4test}); and Table~\ref{tab:cmtf} compares against a prior competing method, CMTF, at every density with a wide margin. None of this is a test at application-relevant scale, a fully tuned competing-method comparison, or a test on real data. The full battery of experiments at this scale takes on the order of twenty minutes on ordinary hardware, driven mostly by the widened hyperparameter search; that cost is worth knowing before scaling $n_I$ much further.

\section{Governance-Coupled Schema Evolution}
\label{sec:governance}

This section is separate from the optimization contributions above. It is a systems and governance procedure, for extending the index sets of $\mathcal{T}$ and $M$ over time. It is not a mathematical result.

\begin{enumerate}[leftmargin=*]
\item \textbf{Candidate detection.} Observations with high residual against every existing cell are accumulated across submissions, then clustered. A large, internally coherent cluster is flagged as a candidate new cell.
\item \textbf{Governance gate.} A human panel approves or rejects each flagged candidate before the schema changes. Detection is automatic. Admission is not.
\item \textbf{Taxonomy-assisted labeling.} Admitted candidates are labeled using existing external classification standards where available, e.g.\ O*NET, rather than an arbitrary name.
\item \textbf{Versioned reconciliation.} Schema version $v$ expands to $v+1$. An embedding map $\iota_v$ pads $\mathcal{T}^{(v)}$, and $M^{(v)}$, with unknown entries at the new coordinates. A reconciliation sample $\Omega_{\mathrm{recon}} \subset \Omega^{(v)}$ is re-scored under $v+1$, and
\[
\Delta_{\mathrm{recon}} = \frac{1}{|\Omega_{\mathrm{recon}}|} \sum_{x \in \Omega_{\mathrm{recon}}} \left| \mathrm{score}_{v+1}(x) - \mathrm{score}_v(x) \right|
\]
is checked against a tolerance, to certify backward compatibility. A large $\Delta_{\mathrm{recon}}$ signals something specific: the expansion revealed miscalibration in the old schema, not just a missing cell.
\end{enumerate}

This procedure applies identically to $\mathcal{T}$ and $M$. New categories in $\mathcal{T}$ (job functions, industries) and new indicator dimensions in $M$ (business-health fibers) are handled by the same four mechanisms. We make no claim of mathematical novelty for this section; its content is procedural.

\section{Related Work}
\label{sec:related}

Tensor rank and its convex relaxations are treated in \citet{kolda2009, hillar2013, liu2013, signoretto2013}. Multilinear (Tucker/HOSVD) rank originates with \citet{delathauwer2000}; tubal rank with \citet{kilmer2011}. Matrix completion sample-complexity theory is due to \citet{candes2009, candes2010}; the soft-impute algorithm we use for warm starts is from \citet{mazumder2010}. The closest prior formulation to ours is coupled matrix--tensor factorization (CMTF): it couples a tensor and matrices through shared CP factors, fit by alternating least squares \citep{acar2011}. We couple through a learned linear operator on unfoldings, under nuclear-norm relaxation, instead, and add an ordinal constraint and schema evolution, neither present in that line of work. Section~\ref{sec:experiments} compares against a masked-ALS CMTF baseline directly. The closest prior work to Theorem~\ref{thm:sequential} specifically is inductive matrix completion \citep{jain2013, natarajan2014}, which recovers a matrix from few entries via known side-information matrices. Our sequential guarantee differs in one key way: our side information is itself estimated, not given, and is jointly re-estimated elsewhere in the algorithm. Our convergence analysis rests on proximal alternating linearized minimization \citep{bolte2014}, which builds on earlier block coordinate descent theory \citep{grippo2000, xu2013}. Guaranteed low-rank recovery via nuclear-norm minimization, more broadly, follows \citet{recht2010}. The large-scale randomized SVD used in our complexity analysis follows \citet{halko2011}.

\section{Limitations}
\label{sec:limitations}

Theorem~\ref{thm:palm} covers the algorithm exactly as implemented (Section~\ref{sec:algorithm}), using the mode-1 nuclear norm \eqref{eq:tensornorm}, not the stronger overlapped-norm regularizer \eqref{eq:overlapped}. The overlapped norm would additionally control the job-function and abstraction-level modes; it remains available, but its convergence theory is open, and we do not use it here. The hard monotonicity constraint on $M$ is still enforced only as a post-hoc PAV projection, outside the scope of Theorem~\ref{thm:palm}. Theorem~\ref{thm:sequential} proves a real piece of Conjecture~\ref{conj:joint}'s mechanism, but only for a sequential estimator with $G$ known exactly; it does not cover the jointly, alternately estimated $(\hat{M}, \hat{G})$ that the algorithm of Section~\ref{sec:algorithm} actually produces, where feedback between the $M$- and $G$-updates could behave differently. Our evidence for the joint case is multi-seed (5 seeds), at one ambient scale ($n_I = 40$), and one noise level. One density regime, $p_M \lesssim 0.03$ in Table~\ref{tab:asym}, is where the conjectured mechanism visibly fails; we read that as informative about its boundary, not as a weakness to hide. Section~\ref{sec:whylarge} reports that recovery error falls sharply, not flat, as $n_I$ grows at fixed density, and offers a tentative averaging-based explanation for it; that explanation is not yet backed by a matching sample-complexity proof, and part of the measured improvement could still reflect the $n_I=1000$ threshold not being fully tuned (Section~\ref{sec:sanity}). The complexity estimates in Section~\ref{sec:complexity} are not benchmarked against wall-clock timings. We tested $n_I$ up to 1000 for the sequential estimator (Section~\ref{sec:whylarge}); we have not tested the joint PALM algorithm itself beyond $n_I = 40$, nor at the density an application-relevant $n_2 n_3$ (job function times abstraction level) would need. The CMTF baseline in Section~\ref{sec:experiments} uses a single random initialization and 15 ALS sweeps; CP-ALS is known to be sensitive to initialization, so this comparison may understate CMTF's best achievable performance with multiple restarts, which we did not run. No experiments use real or deployed labor-market or business-health data; the applied instantiation is a design, not a validated system. The governance layer of Section~\ref{sec:governance} has no throughput or cost model.

\section{Conclusion}

We gave a precisely typed formulation of coupled tensor--matrix recovery, using the mode-1 nuclear norm because it admits an exact proximal step. We proved existence of a minimizer, via an explicit ridge term. We proved PALM convergence to a critical point, for the algorithm exactly as implemented, by verifying known nonconvex-optimization hypotheses and identifying which of them require this problem's specific three-block structure. We derived a rigorous sampling bound for the uncoupled sub-problem, and proved a sample-complexity result for a sequential sub-case of the coupled problem, isolating what remains conjectural (joint, alternating estimation) from what is now proved (sequential estimation with known $G$). We reported multi-seed synthetic experiments with variance estimates, a direct empirical test of the new theorem, and a quantitative account of the observed recovery-error magnitude. We separated a non-mathematical schema-evolution procedure from the optimization core. Three items remain open: a convergence guarantee for the overlapped-norm extension; closing the gap between Theorem~\ref{thm:sequential}'s sequential guarantee and Conjecture~\ref{conj:joint}'s joint-estimation claim; and validation at application-relevant scale, on real data.

\end{document}